\newtheorem{theorem}{Theorem}[section]     
\newtheorem{lemma}[theorem]{Lemma}
\newtheorem{proposition}[theorem]{Proposition}
\newtheorem{corollary}[theorem]{Corollary}
\title{On a lemma of Schensted}
\author{A. Abram}
\address{Antoine Abram,
D\'epartement de math\'ematiques, Universit\'e du Qu\'ebec \`a Montr\'eal}
\email{abram.antoine@courrier.uqam.ca}
\author{C. Reutenauer}
\address{Christophe Reutenauer,
D\'epartement de math\'ematiques, Universit\'e du Qu\'ebec \`a Montr\'eal}
\email{Reutenauer.Christophe@uqam.ca}
\date{\today}
\begin{document}


\begin{abstract} We give a direct proof of Schensted's lemma asserting that row and column insertion in a tableau commute.
\end{abstract}
 
\maketitle

\tableofcontents


\section{Introduction}

The purpose of this Note is to give a complete direct proof of Schensted's Lemma 6 in \cite{S}. This result asserts that row insertion and 
column insertion commute. This nice commutation result, besides its own interest, has many applications. One of them is that reversing the 
permutation amounts to transpose its $P$-tableau (Theorem 3.2 in \cite{Sa}, a result which is implicit in Schensted's article). Moreover, many 
proofs in the theory of the Schensted algorithm (also called Robinson-Schensted, or Robinson-Schensted-Knuth, RSK) use this commutation result.

Schensted gives a direct proof of his lemma; however many cases are not considered  and not even listed, so that the proof is incomplete. 
Similarly in \cite{Sa} (Proposition 3.2.2)\footnote{The present Note may be seen as an addendum to the very nice book of Bruce Sagan \cite{Sa}.}. Schensted's lemma has indirect proofs, see for example \cite{St} (Lemma 7.23.14 in Stanley's book, and Corollary 
A1.2.11 in the Appendix by Fomin), or \cite{vL} Theorem 4.1.1. It follows also from the results of Schützenberger relating Schensted's 
algorithm and jeu de taquin \cite{Sc}. Moreover, the theory of the plactic monoid also implies Schensted's lemma, since column and row insertions correspond to left and right products in this monoid  (see the chapter by Lascoux, Leclerc and Thibon on the plactic monoid in the book by Lothaire \cite{LLT}).

We sketch now our direct proof of Schensted's lemma. Note that we do not argue by induction using the maximum element of the tableau, as is done by Schensted and Sagan.

The ideas of the proof are as follows. 
We carefully define the {\it trail} of an insertion, which is a sequence of boxes with their labels; this is called elsewhere the ``insertion path". 
We consider the trail as embedded in the tableau 
{\it before} its modification by the insertion. Note that the modification of the tableau by the insertion is obtained by sliding the labels along the 
trail.

Now, we have two insertions: a column insertion, and a row insertion; hence two trails, a row trail and a column trail. We study how these two 
trails intersect. If they do not intersect, the commutation 
is easy to see. 

Suppose now that the two trails intersect: we show that they have exactly one box in common, and do not intersect elsewhere, see Figure 
\ref{croisement}. The tableau obtained by the two insertions, in either order, is obtained as follows.
First, observe that if one wants to slide the labels along the two trails (row and column), then at the neighbourhood of the intersection, one has 
conflicts between boxes. Precisely, the intersection box could get two labels, and there is only one label for the two boxes (in the two trails) after the intersection box.
We prove that a simple rule, depending on two cases, solves the conflict (see Figure \ref{Cases}, where $s$ is the label of the intersection, and 
where the labels of the column trail, resp. of the row trail, are $\ldots,a,s,b,\ldots$, resp. $\ldots,i,s,j,\ldots$), and that for the other boxes, one 
performs the ordinary sliding of the two trails.

\section{Schensted insertions}

Recall that a {\it tableau} is a finite lower order ideal of $\mathbb N\times \mathbb N$, where the latter is partially ordered by the usual componentwise order ($(a,b)
\leq(u,v)$ if and only if $a\leq u$ and $b\leq v$), together with an increasing injective mapping from this subset into a totally ordered set 
(usually the latter set is $\mathbb N$).

Recall that the {\it row insertion} of an element $x$ into a row $L$ (with $x\notin L$), is the row obtained by adding $x$ at the end of $L$ if 
$x>\max(L)$ or if the row is empty, and if this condition is not satisfied, then it is $x\cup L\setminus y$, where $y$ is the smallest upper bound 
of $x$ in $L$; in the latter case, $y$ is said to be {\it bumped}. The new row is denoted $L\leftarrow x$.

The row insertion of $x$ into a tableau $T$, denoted $T\leftarrow x$, is obtained by inserting $x$ into the first row of $T$, and then inserting 
the bumped element into the second row, and so on; the process stops when there is no element bumped.

Columns insertion is defined symmetrically, by replacing rows by columns, and the resulting tableau is denoted $x\to T$.

For later use, we state without proof the following easy lemma.
\begin{lemma}\label{modify} Let $L$ be a row, $x\notin L$, and let $y$ be bumped by $x$ in the row insertion $L\leftarrow x$. 
Define a row $L'$ by modifying the elements of $L$, except $y$, so that no element at the left of $y$ increases; then in the insertion $L'\leftarrow x$, $y$ 
is also bumped.
\end{lemma}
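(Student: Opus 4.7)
The plan is to unpack the bumping condition and then verify that each piece of it survives the passage from $L$ to $L'$.

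First I would recast the hypothesis. Since $y$ is bumped by $x$ in $L\leftarrow x$, by definition $y$ is the smallest element of $L$ strictly greater than $x$. Because $L$ is a strictly increasing row and $x\notin L$, this is equivalent to the following two statements: every entry of $L$ strictly to the left of $y$ is $<x$, while $y$ and every entry strictly to its right are $>x$ (in fact $\geq y$).

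Next I would transfer this picture to $L'$. By hypothesis $y$ itself is unchanged, and every entry to the left of $y$ has only been decreased; in particular each such entry remains $<x$. As a byproduct, $y$ occupies the same position in $L'$ as in $L$, since no modified left neighbour could jump above the unchanged value $y$. Finally, because $L'$ is again a (strictly increasing) row, every entry strictly to the right of $y$ in $L'$ must be $>y>x$.

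Combining these three observations: in $L'$ we have $y>x$, every entry to the left of $y$ is $<x$ (so none is an upper bound of $x$), and every entry to the right of $y$ is $>y$ (so none is a smaller upper bound than $y$). Hence $y$ is the smallest upper bound of $x$ in $L'$, which is exactly the statement that $y$ is bumped in $L'\leftarrow x$.

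There is no real obstacle here; the only point that deserves explicit mention is that $y$ keeps its position in $L'$, which follows immediately from the two constraints that $y$ is fixed and that its left neighbours can only decrease.
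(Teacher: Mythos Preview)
Your proof is correct. The paper states this lemma without proof, describing it as an ``easy lemma'', so there is no argument to compare against; your verification is precisely the routine check the authors leave to the reader.
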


\section{Schensted's lemma}

\begin{theorem} (Schensted \cite{S} Lemma 6) Suppose that $T$ is a tableau, and $x,y$ distinct elements not in $T$. Then
$$(x\to T)\leftarrow y=x\to(T\leftarrow y).$$
\end{theorem}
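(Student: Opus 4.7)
The plan is to attach to each insertion its \emph{trail}: for the column insertion $x \to T$, the sequence of boxes (together with their labels in $T$) through which the inserted value propagates, and similarly for the row insertion $T \leftarrow y$. Both trails live inside the original tableau $T$, and performing the insertion amounts to sliding the labels one step along the trail and appending the last bumped element in a new box outside $T$. I would start by recording the standard monotonicity properties of trails (a column trail steps down with labels strictly decreasing top-to-bottom on the bumped side, a row trail steps right with labels strictly decreasing left-to-right), since these inequalities drive everything that follows.

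Next, I would consider the row trail $R$ of $T \leftarrow y$ and the column trail $C$ of $x \to T$ simultaneously inside $T$. The easy case is $R \cap C = \emptyset$: each insertion happens away from the other's trail, and Lemma \ref{modify} (together with its column analogue) ensures the trail of the second insertion is unchanged by the first, because the modifications produced by the first insertion do not alter which element is bumped at each step of the second. It then follows that $(x\to T)\leftarrow y$ and $x\to(T\leftarrow y)$ are both obtained by independent sliding along $R$ and $C$, and therefore agree.

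The main obstacle is the intersecting case. The first subtask is to show that $R \cap C$ consists of exactly one box: once the two trails share a cell, the geometry of the steps ($C$ moves downward, $R$ moves rightward) together with the strict inequalities between successive labels on each trail forces the trails to leave the shared cell in different directions and never reunite. Let $s$ be the shared label, with column neighbours $\ldots,a,s,b,\ldots$ and row neighbours $\ldots,i,s,j,\ldots$. Sliding both trails naively creates a conflict around $s$: the intersection box would receive two candidates ($a$ from above and $i$ from the left), while the two boxes immediately after $s$ on each trail would compete for the single value $s$. I would then split into two subcases according to the order of $a$ versus $i$ (equivalently of $b$ versus $j$), and check directly that the resolution rule sketched in the introduction produces a valid tableau in each case.

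Finally I would verify that the tableau just constructed coincides with both sides of the identity. For the first order, I would perform $x\to T$, locate the row trail of $y$ in the intermediate tableau, and use Lemma \ref{modify} to match it with $R$ everywhere except in a controlled neighbourhood of $s$, where the conflict rule applies; the other order is handled by a symmetric argument using the column analogue of Lemma \ref{modify}. The delicate point, and where I expect most of the case checking to live, is tracking how the row trail of $y$ deforms after the column insertion of $x$ has already modified $T$, especially in the rows just above and at the intersection box $s$; here the hypothesis that $x \ne y$, together with the strict inequalities on the trails, is what ultimately pins down which of the two subcases applies.
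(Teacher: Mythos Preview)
Your plan is essentially the paper's own proof: define the two trails in $T$, handle the disjoint case by independent sliding, show the intersection is a single box, and resolve the local conflict at $s$ by the two-case rule on $a$ versus $i$, checking via Lemma~\ref{modify} that the second trail is unchanged away from the intersection. Two points you pass over that the paper treats explicitly and that you should not skip:

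\begin{itemize}
\item Your ``easy case'' $R\cap C=\emptyset$ assumes that if the trails share no box then they do not interfere. But the trails are broken lines, and one must also rule out a \emph{geometric} crossing where the two paths cross between box centres without sharing a box; the paper isolates this as a separate lemma (no ``weak intersection''), proved by a short cycle-of-inequalities argument. Without it you cannot conclude that one trail lies entirely to one side of the other, which is what makes Lemma~\ref{modify} applicable row by row.
\item In the intersecting case you tacitly assume the common box carries a label $s$. The paper also handles separately the case where the unique common box is the \emph{empty} (newly created) box of both trails; there is no $s$, and the resolution rule degenerates to placing $\min(a,i)$ in that box and the other just above or to the right.
\end{itemize}

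Finally, your description of the trail geometry is reversed: the row trail of $T\leftarrow y$ runs through consecutive \emph{rows} (weakly north-west in the paper's French convention), and the column trail of $x\to T$ runs through consecutive \emph{columns} (weakly south-east), with labels strictly \emph{increasing} along each. Getting these directions straight is what makes the ``one intersection only'' argument and Lemma~\ref{under} work, so fix this before carrying out the case analysis.
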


\section{A trail}

Given a tableau $T$ and a row insertion $T\leftarrow x$, with $x$ not in $T$, we call {\it trail} of 
this insertion the {\it sequence} of 
boxes of $T$,  which are activated in this insertion, with their labels in $T$, followed by the newly created empty box. Formally, if $x$ is 
inserted at the end of the first row
of $T$, then the trail is the newly created box, without label. Otherwise, let $y$ the element bumped from the first row of $T$, and let
$T'$ be the tableau obtained by removing the first row of $T$; then the trail of $T\leftarrow x$ is the box containing $y$ with its label $y$, 
followed by the trail 
of $T'\leftarrow y$. We call {\it empty box of the trail} its last box (which is unlabelled).

The following properties follow easily from the definition of row-insertion:
\begin{itemize}
\item
The consecutive boxes on the trail are on consecutive rows, beginning by the first row of $T$, and they lie on columns that are weakly decreasing; in other words,
the trail goes weakly to the north-west\footnote{We take the French representation of tableaux; for the English one, one has to interchange everywhere north and south, and look at the figures upside down.}; it may go north, but not west.
\item 
If $u,v$ are consecutive labels of the trail, then when $u$ is row-inserted in the row containing $v$, $v$ is bumped, and this defines $v$ uniquely knowing $u$ and the row of $v$;
\item
The labels on a trail are strictly increasing ({\it trail inequality}).

\item
The tableau $T\leftarrow x$ is obtained by sliding each label on the trail to the next box in the trail, and by filling the first box by $x$.
\end{itemize}

For a column insertion $x\to T$, the trail is defined symmetrically, and one has the similar properties:
\begin{itemize}
\item
The consecutive boxes on the trail are on consecutive columns, beginning by the first column of $T$, and they lie on rows that are weakly decreasing; in other words, 
the trail goes weakly to the south-east; it may go east, but not south.
\item 
If $u,v$ are consecutive labels of the trail, then when $u$ is column-inserted in the column containing $v$, $v$ is bumped, and this defines $v$ uniquely knowing $u$ and the column of $v$;
\item
The labels on a trail are strictly increasing ({\it trail inequality}).

\item
The tableau $x\to T$ is obtained by sliding each label on the trail to the next box in the trail, and by filling the first box by $x$.
\end{itemize}


\section{Two trails}

We consider now a column insertion $x\to T$ and a row insertion $T\leftarrow y$ of two distinct elements in the same tableau, with $x,y\notin 
T$. 
We therefore have two trails, called here column trail and row trail. We say that they {\it strongly intersect} if they have a box in common.

\begin{lemma}\label{most} The column and row trails have at most one box in common.
\end{lemma}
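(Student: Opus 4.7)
The plan is to argue by contradiction. I would suppose that two distinct boxes lie on both trails and write them as $R_i=C_j$ and $R_{i'}=C_{j'}$, where $R_1,\ldots,R_p$ denotes the row trail and $C_1,\ldots,C_q$ the column trail in their natural orders, with $R_p$ and $C_q$ the empty terminal boxes of the two trails. After possibly swapping the two common boxes, I may assume $i<i'$.

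The first step is purely geometric. By the first property listed for the row trail, $R_k$ sits in row $k$ of $T$, so $\mathrm{row}(R_{i'})=i'>i=\mathrm{row}(R_i)$. Along the column trail the row index is only weakly decreasing, so $j<j'$ would force $\mathrm{row}(C_{j'})\leq\mathrm{row}(C_j)$, contradicting the previous inequality. Hence $j'<j$. In the generic case, where both common boxes are labeled boxes of $T$, the trail inequality now closes the argument at once: letting $s$ and $s'$ denote the labels of $R_i=C_j$ and $R_{i'}=C_{j'}$, the row trail and $i<i'$ give $s<s'$, while the column trail and $j'<j$ give $s'<s$, a contradiction.

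The point where care is needed, and the step I expect to be the main obstacle, is the case in which one of the two common boxes is the empty terminal box of a trail, since then there is no label available to plug into the trail inequality. I would dispose of this case by observing that the empty box of a trail does not lie in $T$; so if it coincides with a box of the other trail, that other box is also outside $T$ and must itself be the empty terminal box of the other trail. This forces $i'=p$ and $j'=q$ simultaneously. But the geometric step above uses only the monotonicity of the row indices and so remains valid, yielding $j'<j$; this is incompatible with $j'=q$ being the largest index of the column trail, and the contradiction is complete. Outside of this empty-box subtlety, the lemma is a short combination of the two monotonicity properties of a trail with the strict increase of labels along it.
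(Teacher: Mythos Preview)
Your proof is correct and follows essentially the same approach as the paper's: both arguments hinge on the observation that a box common to the two trails is empty for one trail iff it is empty for the other (since non-empty trail boxes lie in $T$ while empty ones do not), and then use the opposite monotonicity of the two trails together with the strict increase of labels to rule out two common boxes. Your version is somewhat more explicit, indexing the trails and deriving $j'<j$ from $i<i'$ via row-monotonicity, whereas the paper phrases the same idea geometrically (``one trail goes north-west and the other south-east''), but the content is identical.
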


\begin{proof} Recall that each trail has an empty box, which is its last box. Suppose that the trails intersect in their last box. Since one trail 
goes north-west and the other south-east, this box must be the only intersecting box.

Note that if a box is in the intersection, then it cannot be the empty box for one trail, and nonempty for the other. Indeed, each nonempty box 
of a trail is in $T$, while an empty one is not in $T$.

Suppose now that the two trails have two intersecting boxes, which by what precedes, are not empty. Since the labels strictly increase along 
a trail, and since one goes north-west and the other south-east, we obtain a strictly increasing cycle of numbers; this is impossible.
\end{proof}

In the row trail and in the column trail, we draw a straight segment between the center of any two consecutive boxes; in this way we 
obtain a broken line in the plane, called the {\it geometric trail}; it includes as vertices the center of the boxes of the trail. 

Observe that if $C$ is any center of a box in the tableau, which lies on the geometric trail, then it is a vertex of the latter (since consecutive boxes 
of the row trail lie on consecutive rows of the tableau, and similarly for the column trail). 

We then call  {\it intersection} of the two trails an intersection of the geometric trails. We call it a {\it weak intersection} if this intersection is not strong; that is, in view of what has been just said, if it is not the center of any box in $T$.

\begin{lemma}\label{inter} The trails have no weak intersection.
\end{lemma}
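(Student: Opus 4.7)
The plan is to argue by contradiction. Suppose a weak intersection exists. Because only the vertices of a geometric trail are lattice points (consecutive trail boxes lie on consecutive rows, resp.\ columns), the intersection point $P$ must be a non-lattice point, so it lies in the strict interior of some row segment $S_R$, running from a row-trail box $A=(r,a)$ to $B=(r+1,a')$ with $a'\le a$, and of some column segment $S_C$, running from a column-trail box $C=(s,b)$ to $D=(s',b+1)$ with $s'\le s$. Reading off the coordinates of $P$ and using that they are non-integer, I would extract the inequalities
\[
s'\le r<r+1\le s \qquad\text{and}\qquad a'\le b<b+1\le a.
\]

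The first key step is to rule out that $B$ or $D$ is the empty box of its trail. If $B$ were the empty box of the row trail, then in $T$ the row $r+1$ would end at column $a'-1$; but from $(s,b)\in T$ together with $r+1\le s$, the lower-order-ideal property forces $(r+1,b)\in T$, contradicting $b\ge a'$. The case of $D$ is symmetric, using $(r,a)\in T$, $b+1\le a$ and $r\ge s'$. Therefore $A,B,C,D$ all belong to $T$, and they carry labels $u,v,p,q$ satisfying the trail inequalities $u<v$ (row) and $p<q$ (column).

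The contradiction then comes from the two auxiliary boxes $(r,b+1)$ and $(r+1,b)$, both of which lie in $T$ by the lower-order-ideal property applied to $A$ and $C$ respectively. Strict monotonicity along row $r$, column $b+1$, row $r+1$ and column $b$ of $T$ yields
\[
q = T(s',b+1)\le T(r,b+1)\le T(r,a) = u,\qquad v = T(r+1,a')\le T(r+1,b)\le T(s,b) = p.
\]
Splicing in $u<v$ gives $p<q\le u<v\le p$, which is absurd; hence no weak intersection exists.

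The main obstacle is the bookkeeping at the beginning: translating "$P$ is a non-lattice intersection point" into the eight clean numerical inequalities above, and using the shape of $T$ to exclude the possibility that one of the non-$A$, non-$C$ endpoints is merely an empty box added by the corresponding insertion. Once this is done, the rest is a direct chase through the strict increase of rows and columns of $T$ combined with the two trail inequalities.
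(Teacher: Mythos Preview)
Your proof is correct and follows essentially the same contradiction as the paper: isolate the four trail boxes nearest the crossing, show they all lie in $T$, and combine the two trail inequalities with the tableau monotonicity to obtain an impossible cycle $p<q\le u<v\le p$. Your explicit coordinate bookkeeping is a mild streamlining of the paper's version: by first arguing that both coordinates of $P$ are non-integers, you automatically rule out the degenerate case of a horizontal column-trail segment (or vertical row-trail segment) that the paper handles separately via Figure~\ref{weak}, right part, and the componentwise comparisons you route through the auxiliary boxes $(r,b{+}1)$ and $(r{+}1,b)$ are exactly what the paper invokes in one breath as ``tableau inequalities''.
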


\begin{proof} Suppose by contradiction that the geometric trails have a weak intersection $M$. Then $M$ is not the center of some box in $T$, by the
observation before the statement.

Consider the vertices of the two trails 
closest $M$: $A,B$ in this order for the column trail, and $I,J$ for the row trail. Then $A,B$ are on two consecutive columns and $I,J$ on two consecutive rows.

Suppose by contradiction that $A,B$ are on the same row. Then $M$ is on the open segment $(A,B)$. Thus we have Figure \ref{weak}, right part, where the oblique segment is part of the geometric row trail. On this row trail there must be a box on the same horizontal line as the boxes of $A,B$, and its central point must be on the oblique line; but this is impossible.

Hence $A,B$ are not on the same row, and symmetrically, $I,J$ are not on the same column.
Thus we see that we must have Figure \ref{weak} left part, where $a,b,i,j$ are the labels of $A,B,I,J$. Indeed,  
the boxes must be nonempty: this is clear for the boxes with $i,a$, since they are not the last boxes on their trail; and if, for example, the box 
with $b$ were empty, then it would contradict the tableau property, since we would have this box empty in $T$, but the box with $i$ is in $T$.

Now, since the values increase along a trail, we have $i<j,a<b$.
By the tableau inequalities, we have  $j<a,b<i$. We obtain $a<b<i<j<a$, a contradiction.
\end{proof}

\begin{figure}
\center
\begin{tikzpicture}[scale=0.45]

\draw (2,6) -- (2,7) -- (3,7) -- (3,6) -- (2,6);
\draw (0,3) -- (0,4) -- (1,4) -- (1,3) -- (0,3);
\draw (7,2) -- (8,2) -- (8,3) -- (7,3) -- (7,2);
\draw (3,1) -- (4,1) -- (4,2) -- (3,2) -- (3,1);
\draw[dash pattern=on \pgflinewidth off 2pt] (1,3) -- (7,3);
\draw[dash pattern=on \pgflinewidth off 2pt] (3,2) -- (3,6);
\draw (3.4,2) -- (2.6,6);
\draw (1,3.4) -- (7,2.6);
\draw (0.5,3.5) node[scale=0.9]{j};
\draw (7.5,2.5) node[scale=0.9]{i};
\draw (3.5,1.5) node[scale=0.9]{b};
\draw (2.5,6.5) node[scale=0.9]{a};

\draw (12,3) -- (12,4) -- (13,4) -- (13,3) -- (12,3);
\draw (13,3) -- (13,4) -- (14,4) -- (14,3) -- (13,3);
\draw (12.5,3.5) -- (13.5,3.5);
\draw (11.75,4.3) -- (14.75,2.7);
\end{tikzpicture}
\caption{Weak intersection}\label{weak}
\end{figure}
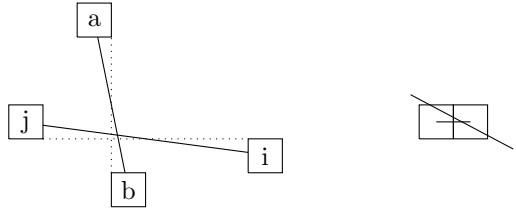

We assume now that the two trails have a strong intersection, which is a box $S$, which we assume nonempty.

\begin{lemma}\label{under} The part of the row trail (resp. column trail) which is before the intersection is strictly under the part of the column-trail (resp. row-trail) which is after it. See Figure \ref{croisement}.
\end{lemma}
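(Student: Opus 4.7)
My plan is to reduce the geometric assertion to a single column-wise combinatorial inequality, and then propagate it to the broken-line picture by monotonicity. Let $S=(r,c)$ denote the intersection box, with label $s$. I would index the row trail before $S$ by $k=0,1,\ldots$, placing the $k$-th box at $(r-k,c_k')$, where $c=c_0'\le c_1'\le\cdots$ and the labels satisfy $s=i_0>i_1>\cdots$ (the trail inequalities traversed backward). Symmetrically I would index the column trail after $S$ by $\ell=0,1,\ldots$, with $\ell$-th box $(r_\ell,c+\ell)$, satisfying $r=r_0\ge r_1\ge\cdots$ and labels $s=b_0<b_1<\cdots$. Both sequences of boxes thus lie weakly south-east of $S$.

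The combinatorial core is the following claim: for every $k,\ell\ge 1$ with $c_k'=c+\ell$, one has $r_\ell>r-k$. Both $(r-k,c+\ell)$ and $(r_\ell,c+\ell)$ lie in column $c+\ell$ of $T$, bearing labels $i_k$ and $b_\ell$ respectively; the trail inequalities give $i_k<s<b_\ell$, and since labels strictly increase going up any column of $T$ (French convention), the box with $b_\ell$ must sit strictly north of the one with $i_k$.

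To obtain the geometric ``strictly under'' of Figure \ref{croisement}, I would view the column trail after $S$ and the row trail before $S$ as weakly decreasing piecewise-linear broken lines $f(d)$ and $g(d)$ over columns $d\ge c$, both equal to $r$ at $d=c$. The claim gives $f(d)>g(d)$ at every $d=c_k'$ directly. On an intermediate integer column $c_{k-1}'<d<c_k'$, applying the claim at the right endpoint (with $m=c_k'-c$) gives $r_m\ge r-k+1$; combining this with the monotonicity $r_\ell\ge r_m$ (since $\ell=d-c<m$) yields $f(d)\ge r-k+1$, which exceeds the linear estimate $g(d)<r-k+1$. Linearity between integer columns extends the strict inequality to all real $d>c$.

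The symmetric statement, that the column trail before $S$ is strictly under the row trail after $S$ in the north-west region, is proved by the same argument after exchanging the roles of rows and columns: the relevant pair of boxes $(r+k,c-\ell)$ and $(r_\ell',c-\ell)$ carries labels $a_\ell<s<j_k$, forcing $r_\ell'<r+k$. The main obstacle is really only bookkeeping: once one correctly identifies the south-east (resp.\ north-west) quadrant as the common neighborhood of the row-trail-before and column-trail-after (resp.\ column-trail-before and row-trail-after) and aligns the two indexings so both trails run out of $S$ in the same direction, the one-line inequality $i<s<b$ at a shared column does all of the work.
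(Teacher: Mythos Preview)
Your argument is correct and takes a genuinely different route from the paper's. The paper's proof is topological: it invokes Lemmas~\ref{most} and~\ref{inter} to know that the row-trail-before-$S$ and the column-trail-after-$S$ meet nowhere except at $S$, and then reads off the ordering from the boundary data (the row trail touches row~$1$, the column trail ends at the end of a row). Your proof is instead a direct combinatorial comparison: at every column where both trails are present you exhibit two boxes of $T$ in that column whose labels satisfy $i_k<s<b_\ell$, and the column-monotonicity of $T$ forces the column-trail box to sit strictly higher; you then propagate this columnwise inequality to the full broken lines by monotonicity and piecewise linearity. The advantage of your approach is that it is self-contained --- it does not rely on Lemmas~\ref{most} or~\ref{inter} at all --- whereas the paper's two-line argument is shorter precisely because it cashes in those earlier lemmas. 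One small omission to patch: when $(r_\ell,c+\ell)$ is the terminal empty box of the column trail it carries no label $b_\ell$, so the inequality $i_k<b_\ell$ is unavailable; here the conclusion $r_\ell>r-k$ follows instead from the Ferrers shape of $T$, since $(r-k,c+\ell)\in T$ while $(r_\ell,c+\ell)\notin T$ in the same column. The symmetric half has the analogous edge case with the empty box of the row trail. Also, where the row trail has a vertical segment (several consecutive $c_k'$ equal), your $g$ is multi-valued; the fix is to apply your core claim with the smallest such $k$, which bounds the top of the segment.
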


\begin{figure}
\includegraphics[scale=1]{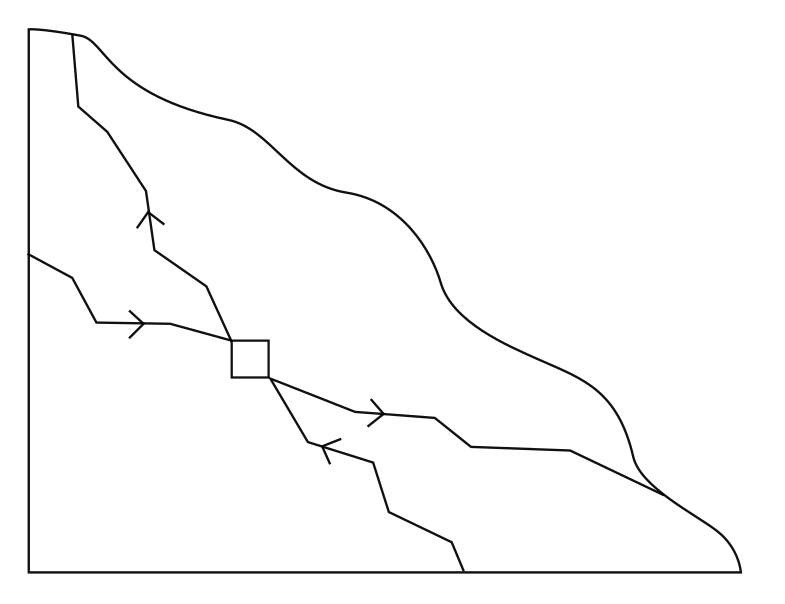}
\caption{Relative position of strongly intersecting row and column trails}\label{croisement}
\end{figure}

\begin{proof} Consider the part of the row trail which is before $S$, and the part of the column trail which is after it. These two parts have no intersection, by Lemmas \ref{most} and \ref{inter}; moreover, the row trail starts from the first row, and the column trail ends at the end of some row. This implies the first statement. The other one is symmetric.
\end{proof}

We denote by $s$ the value in the box $S$ in $T$ and let $\ell$ be the row number of $S$. We denote by $a,s,b$ the consecutive values in the 
trail $x\to T$, and by $i,s,j$ the consecutive values in the trail $T\leftarrow y$. Recall that $a,s,b$ (resp. $i,s,j$) lie in consecutive colomns 
(resp. rows). There is a slight abuse of notation, in the sense that the box after $S$ could be the empty box in either trail, in which case we 
write $b$ or $j=\emptyset$. We also consider the cases where $i$, resp. $a$, does not exist, meaning that $s$ is the first label of the trail of $T\leftarrow x$, resp. $x\to T$; we then put $i=x$, resp. $a=y$.

\begin{lemma}\label{impossible} The configurations in Figure \ref{forbidden} are impossible.
\end{lemma}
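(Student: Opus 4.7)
The plan is to set coordinates $S=(\ell,c)$ and pin down the positions of $a,b,i,j$ from the shape of the two trails; then, for each configuration in Figure \ref{forbidden}, to exhibit an auxiliary box forced into $T$ by the lower-order-ideal property and to chain tableau monotonicity with the trail inequalities $a<s<b$ and $i<s<j$ until a contradiction appears.

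Concretely, the column trail places $a$ in column $c-1$ at some row $r_a\geq\ell$ and $b$ in column $c+1$ at some row $r_b\leq\ell$; the row trail places $i$ in row $\ell-1$ at some column $c_i\geq c$ and $j$ in row $\ell+1$ at some column $c_j\leq c$. I expect the two forbidden configurations to be the asymmetric ones: $r_a>\ell$ together with $c_j<c$, and (symmetrically) $r_b<\ell$ together with $c_i>c$.

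For the first, I would examine the box at position $(\ell+1,c-1)$. Since $(r_a,c-1)\in T$ with $r_a\geq\ell+1$, the lower-order-ideal property puts $(\ell+1,c-1)$ in $T$; call its label $u$. Column monotonicity in column $c-1$ gives $u\leq a$ (with equality exactly when $r_a=\ell+1$), and row monotonicity in row $\ell+1$ gives $j\leq u$ (with equality exactly when $c_j=c-1$). Chaining yields $j\leq a$, which contradicts the trail inequality $a<s<j$. The second configuration is eliminated in an entirely symmetric way using the box $(\ell-1,c+1)$, producing $b\leq i$ against $i<s<b$.

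The main obstacle I anticipate is the bookkeeping of boundary sub-cases. The equality cases must be reconciled with the strict trail inequalities (most delicately, the simultaneous coincidence $r_a=\ell+1$ and $c_j=c-1$ would force $a=u=j$, but this is already ruled out directly by $a<s<j$). The degenerate situation where $j$ is the empty box of the row trail needs a separate shape argument rather than an inequality: if $j$ is empty and $c_j<c$, then row $\ell+1$ of $T$ terminates strictly before column $c-1$, contradicting the fact that $(\ell+1,c-1)\in T$ already derived from $a$. The analogous boundary cases for the second configuration (including $b=\emptyset$, or the virtual situations $a=x$, $i=y$ that force $c=1$ or $\ell=1$ and make the hypothesis vacuous) are handled identically, so the whole lemma reduces to this single template.
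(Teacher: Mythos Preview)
Your argument for the two ``asymmetric'' configurations is correct and matches the paper's: the paper simply says ``$j<a$ by the tableau inequalities'' where you explicitly route through the auxiliary box $(\ell+1,c-1)$, but the content is the same, and your treatment of the empty-box boundary cases is fine.

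However, Figure~\ref{forbidden} contains \emph{three} forbidden configurations, not two, and you have missed the middle one entirely. That configuration is: $a$ sits immediately to the left of $s$ (same row, column $c-1$) \emph{and} $i$ sits immediately below $s$ (same column, row $\ell-1$). In your notation this is $r_a=\ell$ together with $c_i=c$, which falls outside both of your templates ($r_a>\ell,\,c_j<c$) and ($r_b<\ell,\,c_i>c$). Your chain-of-inequalities method does not exclude it: there is no auxiliary box to exploit, and the trail inequalities $a<s$, $i<s$ alone say nothing about the relative order of $a$ and $i$.

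The paper handles this middle case with a different idea, namely the bumping characterization. Since $i$ row-bumps $s$ in row $\ell$, $s$ is the smallest entry of that row exceeding $i$, so $i$ is larger than the entry just to the left of $s$, which is $a$; hence $i>a$. Symmetrically, since $a$ column-bumps $s$ in column $c$, $a$ exceeds the entry just below $s$, which is $i$; hence $a>i$. These two conclusions contradict each other. You need to add this case (and this argument, or an equivalent one) to complete the proof.
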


Note that the figure means that $a$ is in the column at the left of $s$, and higher than $s$, and so on.

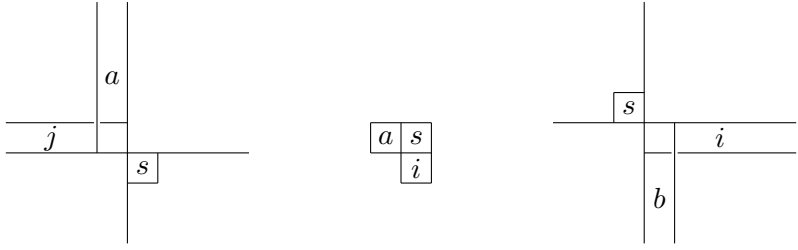
\begin{figure}
\begin{tikzpicture}[scale=0.4]
\draw(0,4) -- (2.9,4); \draw (3.1,4)--(4,4);
\draw(0,3) -- (8,3);
\draw(4,0) -- (4,8);
\draw(3,3) -- (3,8);
\draw(5,3) -- (5,2);
\draw(5,2) -- (4,2);

\draw (1.5,3.5) node{$j$};
\draw (3.5,5.5) node{$a$};
\draw (4.5,2.5) node{$s$};

\draw(12,4) -- (14,4);
\draw(12,3) -- (14,3);
\draw(13,2) -- (14,2);
\draw(12,3) -- (12,4);
\draw(13,2) -- (13,4);
\draw(14,2) -- (14,4);

\draw (13.5,2.5) node{$i$};
\draw (12.5,3.5) node{$a$};
\draw (13.5,3.5) node{$s$};

\draw(18,4) -- (26,4);
\draw(21,3) -- (21.9,3); \draw (22.1,3)--(26,3);
\draw(22,0) -- (22,4);
\draw(21,0) -- (21,8);
\draw(20,5) -- (21,5);
\draw(20,5) -- (20,4);

\draw (21.5,1.5) node{$b$};
\draw (23.5,3.5) node{$i$};
\draw (20.5,4.5) node{$s$};
\end{tikzpicture}
\caption{Impossible configurations (case $b$ or $j=\emptyset$ included)}\label{forbidden}
\end{figure}

\begin{proof} The cases for $j=\emptyset$ at the left, and $b=\emptyset$ at the right, follow from the properties of Ferrers diagrams.

Suppose now that $j,b\neq \emptyset$.
Suppose that we have the leftmost configuration. Then $j\neq  a$ by Lemma \ref{most}; thus $j<a$ by the tableau inequalities; moreover
$a<s$ and $s<j$ by the trail inequalities:
a contradiction.
The rightmost configuration is treated similarly.

For the central one , we have $i>a$, since by row insertion, $i$ bumps $s$. Symmetrically, $a>i$: a contradiction.
\end{proof}

\begin{corollary} One has one of the five configurations of Figure \ref{config}, which indicate the boxes in the trails which have a side in common with $S$.
\end{corollary}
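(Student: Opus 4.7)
The plan is to enumerate, using only the geometry of the two trails, the a priori possible positions of the four neighbours $a, b, i, j$ of the intersection box $S$, and then to apply Lemma~\ref{impossible} to remove the forbidden ones; a short inclusion-exclusion will leave exactly five.

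First, I would use the trail directions to argue that each of $a, b, i, j$ has exactly two possible locations with respect to $S$. Since the column trail goes weakly south-east and passes through $a, s, b$ on three consecutive columns, $a$ must sit in the column immediately west of $S$ and in a row weakly above row $\ell$; it therefore either lies in row $\ell$ itself (directly west of $S$, sharing a side) or strictly above (sharing at most a corner). By the same reasoning $b$ is either directly east of $S$ or strictly south-east; and since the row trail goes weakly north-west and passes through $i, s, j$ on three consecutive rows, $j$ is either directly north or strictly north-west of $S$, and $i$ is either directly south or strictly south-east. This gives $2^4 = 16$ a priori configurations. The edge cases in which one of $a, b, i, j$ is the newly inserted element or the empty box require no separate treatment, since Lemma~\ref{impossible} was stated with them in view.

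Next, Lemma~\ref{impossible} eliminates three families, one for each panel of Figure~\ref{forbidden}: (L) $a$ strictly north-west of $S$ and $j$ strictly north-west of $S$; (C) $a$ directly west of $S$ and $i$ directly south of $S$; (R) $b$ strictly south-east of $S$ and $i$ strictly south-east of $S$. Each of these families contains $4$ of the $16$ configurations, since the two neighbours not constrained remain free. The only nonempty pairwise intersection is $(L)\cap(R)$, namely the unique configuration in which all four of $a, b, i, j$ are strictly diagonal from $S$; the other two pairwise intersections, and the triple intersection, are empty since they would force $a$ (respectively $i$) to be both adjacent and non-adjacent. By inclusion-exclusion, these three families together cover $4+4+4-1 = 11$ configurations, leaving exactly $16 - 11 = 5$.

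Finally, I would match the five surviving Boolean configurations, one by one, with the five panels of Figure~\ref{config}. The essentially only obstacle is notational bookkeeping: checking that, once the adjacency choices for each of $a, b, i, j$ are fixed, the resulting picture really coincides with one of the five drawn cases, with the conventions ``directly west'' vs ``strictly north-west'' (and their symmetric counterparts) handled consistently. Once this correspondence is verified, the corollary is immediate from Lemma~\ref{impossible}.
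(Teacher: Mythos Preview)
Your proposal is correct and follows the same approach as the paper: enumerate the a priori positions of $a,b,i,j$ relative to $S$ and discard those excluded by Lemma~\ref{impossible}. The paper's own proof is a one-liner (``one verifies that the only possible cases are the five in Figure~\ref{config}''), whereas you carry out that verification explicitly via the $2^4$ enumeration and an inclusion--exclusion count; this is more detailed but not a genuinely different route.
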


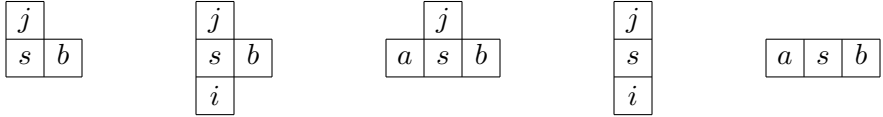
\begin{figure}
\begin{tikzpicture}[scale=0.5]
\draw(0,1) -- (2,1);
\draw(0,2) -- (2,2);
\draw(0,3) -- (1,3);
\draw(0,1) -- (0,3);
\draw(1,1) -- (1,3);
\draw(2,1) -- (2,2);

\draw (0.5,2.5) node{$j$};
\draw (1.5,1.6) node{$b$};
\draw (0.5,1.5) node{$s$};

\draw(5,0) -- (6,0);
\draw(5,1) -- (7,1);
\draw(5,2) -- (7,2);
\draw(5,3) -- (6,3);
\draw(5,0) -- (5,3);
\draw(6,0) -- (6,3);
\draw(7,1) -- (7,2);

\draw (5.5,0.5) node{$i$};
\draw (5.5,2.5) node{$j$};
\draw (6.5,1.6) node{$b$};
\draw (5.5,1.5) node{$s$};

\draw(10,1) -- (13,1);
\draw(10,2) -- (13,2);
\draw(11,3) -- (12,3);
\draw(10,1) -- (10,2);
\draw(11,1) -- (11,3);
\draw(12,1) -- (12,3);
\draw(13,1) -- (13,2);

\draw (11.5,2.5) node{$j$};
\draw (10.5,1.5) node{$a$};
\draw (12.5,1.6) node{$b$};
\draw (11.5,1.5) node{$s$};

\draw(16,0) -- (17,0);
\draw(16,1) -- (17,1);
\draw(16,2) -- (17,2);
\draw(16,3) -- (17,3);
\draw(16,0) -- (16,3);
\draw(17,0) -- (17,3);

\draw (16.5,0.5) node{$i$};
\draw (16.5,2.5) node{$j$};
\draw (16.5,1.5) node{$s$};

\draw(20,1) -- (23,1);
\draw(20,2) -- (23,2);
\draw(20,1) -- (20,2);
\draw(21,1) -- (21,2);
\draw(22,1) -- (22,2);
\draw(23,1) -- (23,2);

\draw (20.5,1.5) node{$a$};
\draw (21.5,1.5) node{$s$};
\draw (22.5,1.6) node{$b$};
\end{tikzpicture}
\caption{Possible configurations}\label{config}
\end{figure}

\begin{proof} Excluding the cases listed in Lemma \ref{impossible} (for example, the left part indicates that $A$ or $J$ must have a side in common with $S$), one verifies that the only possible cases are the five in Figure \ref{config}.
\end{proof}

\section{Three trails}

We consider the trail of the row insertion $T\leftarrow y$ and the trail of the column insertion $x\to T$. We assume that the two trails have a 
strong intersection, which a unique box $S$ labelled $s$. 
The third trail that we study in this section is that of the row insertion $(x\to T)\leftarrow y$. 

The notations $a,b,i,j,s$ are the same as before Lemma \ref{impossible}, and we denote by $A,B,S,I,J$ the boxes that contain these labels.
Say that $S$ is located in row $\ell$.

We compare below the two trails obtained by the two row insertions $T\leftarrow y$ and $(x\to T)\leftarrow y$ (row insertion of $y$ in both 
cases) and begin the task of showing that these two trails are almost equal. When we say below {\it the two trails}, it will always be 
these two row-trails.

\begin{lemma}\label{part} (i) The part of the two trails in rows $1,\ldots,\ell-1$ are equal.

(ii) If the first box in row $\ell+1$ is the same in the two trails, with the same label, then the parts of the two trails above row $\ell$ are equal.
\end{lemma}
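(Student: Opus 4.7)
The plan is to use Lemma \ref{under} to separate the two trails geometrically and then to use Lemma \ref{modify} to propagate the agreement row by row. The starting observation is that since the column trail of $x\to T$ has weakly decreasing row numbers and passes through $S$ at row $\ell$, its portion in rows $1,\dots,\ell-1$ lies entirely in its after-$S$ part, and its portion in rows $\ell+1,\ell+2,\dots$ lies entirely in its before-$S$ part. Combined with Lemma \ref{under}, this implies that for any row $k\neq\ell$ on which the row trail of $T\leftarrow y$ has a labelled box, that box does not lie on the column trail of $x\to T$, and therefore carries the same label in $T$ and in $x\to T$.

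A second ingredient is that the column insertion $x\to T$ alters $T$ only along its column trail, and each alteration either replaces an existing label by a strictly smaller one (because a column insertion bumps a label strictly greater than the inserter) or adds a new cell, namely the terminal empty box of the column trail. By the lower-ideal property of tableaux the added cell sits at the east end of its row, and in particular it lies strictly east of any row-trail box of $T\leftarrow y$ in the same row (since the latter belongs to $T$ and hence to a strictly more westerly column of that row).

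For (i), I would then prove by induction on $k=1,\dots,\ell-1$ the two assertions that the two row trails coincide through row $k$ and that the same element is inserted into row $k+1$. For the inductive step, let $u$ be the element inserted into row $k$ (identical in both insertions by induction) and $y^{\ast}$ the box of $T$ that $u$ bumps from row $k$. Then $y^{\ast}$ has the same label in $T$ and in $x\to T$, while the remaining entries of row $k$ have either the same label or a strictly smaller one, together with possibly one extra cell strictly east of $y^{\ast}$. Lemma \ref{modify} (together with the trivial remark that a cell added strictly east of $y^{\ast}$ cannot become the smallest upper bound of $u$) then gives that $u$ bumps the same box $y^{\ast}$ with the same label in row $k$ of $x\to T$, closing the induction. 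For (ii) the argument is symmetric: the hypothesis supplies the initialisation at row $\ell+1$, after which the same inductive argument runs upward through rows $\ell+2,\ell+3,\dots$; note that in these rows the column trail contributes only label decreases, since the terminal box of the column trail lies in a row $\leq\ell$.

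The only delicate point is verifying that the alterations to $T$ along the column trail are of the form just described, so that Lemma \ref{modify} genuinely applies; once this is observed, the remainder of the argument is a routine row-by-row bookkeeping.
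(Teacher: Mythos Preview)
Your proposal is correct and follows essentially the same approach as the paper: use Lemma~\ref{under} to locate the column trail of $x\to T$ relative to the row trail of $T\leftarrow y$, and then apply Lemma~\ref{modify} row by row. The paper's proof is terser---for part (i) it invokes the slightly stronger geometric fact (implicit in Lemma~\ref{under}) that in rows $1,\dots,\ell-1$ the column trail lies strictly to the right of the row trail, so nothing weakly to the left of the row trail is touched at all---whereas you argue only that the row-trail box itself is untouched and everything else is either decreased or augmented by a new cell at the east end; both versions feed Lemma~\ref{modify} correctly. Your explicit treatment of the terminal empty box of the column trail and of the termination of the row trail in part (ii) fills in details the paper leaves to the reader.
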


Note that it will be shown further that the extra hypothesis in (ii) always holds.

\begin{proof} (i) Consider the rows $1$ to $\ell$ of $T$. When
we apply the insertion $x\to T$, which amounts to slide the elements of $T$ in the trail of $x\to T$, 
we do not modify the part of these rows that are weakly at the left of the trail of $T\leftarrow y$: this follows from Lemma \ref{under}, see Figure \ref{croisement}.
Thus (i) then follows from Lemma \ref{modify}.

(ii) Consider the rows $\ell,\ell+1,\ldots$ of $T$. When we apply the insertion $x\to T$, which amounts to slide the elements of $T$ 
in the trail of $x\to T$, we modify the part of these rows which lie strictly at the left of the trail $T\leftarrow y$ only by decreasing 
elements, or leaving them equal: this follows from Lemma \ref{under}, see Figure \ref{croisement}. Thus (ii) follows from the extra 
hypothesis and Lemma \ref{modify}.
\end{proof}

\begin{proposition}\label{concl} The labels of $S,B,J$ in $(x\to T)\leftarrow y$ are respectively $i,s,a$ if 
$i<a$, and $a,i,s$ if $i>a$. Moreover, the other labels of this tableau are obtained by sliding the labels of the trails of $x\to T$ and $T\leftarrow 
y$, except $i,a,s$, towards the next box of the trail.
\end{proposition}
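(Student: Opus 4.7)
The plan is to trace the row insertion $(x\to T)\leftarrow y$ row by row, using Lemma \ref{part} to bracket the rows below and above $\ell$. By Lemma \ref{part}(i), the row trails of $T\leftarrow y$ and $(x\to T)\leftarrow y$ coincide in rows $1,\ldots,\ell-1$, so the element produced to be inserted into row $\ell$ of $x\to T$ is $i$ (with the convention $i=y$ when $\ell=1$). To describe row $\ell$ of $x\to T$, I would observe that the column trail of $x\to T$ shifts each of its row-$\ell$ labels one box to the right: the label at $S$ becomes $a$, and if $B$ lies in row $\ell$ then $B$ acquires the label $s$. Since column-trail labels are strictly increasing, and since the labels of row $\ell$ of $T$ strictly left of $S$ are all $<i$ (because $i$ bumps $s$ in $T\leftarrow y$), in row $\ell$ of $x\to T$ all labels at columns $\le c_S-1$ are $<i$ and $<a$, the label at $c_S$ is $a$, and the labels at columns $\ge c_S+1$ are $\ge s$, with $s$ sitting at $c_S+1$ whenever $B$ lies in row $\ell$.

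I then resolve row $\ell$. In the case $i<a$, the smallest label $>i$ in row $\ell$ of $x\to T$ is $a$ at $S$, so $i$ bumps $a$. In the case $i>a$, the box $I$ cannot lie at $(\ell-1,c_S)$ (otherwise the column trail would have $a$ bump $i$ from $(\ell-1,c_S)$ rather than $s$ from $S$); by Figure \ref{config} we are then in a configuration where $B$ sits at $(\ell,c_S+1)$ carrying $s$ in $x\to T$, so that $s$ is the smallest label $>i$ in row $\ell$ of $x\to T$, and $i$ bumps $s$ from $B$.

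For row $\ell+1$, the element to insert is $a$ (if $i<a$) or $s$ (if $i>a$), and I claim it bumps $j$ from $J$ in both cases. When $i>a$, Lemma \ref{modify} applies directly to the insertion of $s$: by Lemmas \ref{most} and \ref{inter} the box $J$ is off the column trail, so its label $j$ is preserved, while the column-trail sliding in $x\to T$ only decreases labels at positions strictly left of $J$; hence $s$ still bumps $j$. When $i<a$, Lemma \ref{modify} does not apply verbatim since the inserted element has changed, so I would verify by hand that every label in row $\ell+1$ of $x\to T$ at columns $<c_S$ is $<a$. Here $A$ cannot lie at $(\ell,c_S-1)$ (else $a<i$), so either $A$ does not exist (and $c_S=1$) or $A$ lies strictly above row $\ell$; then the box $(\ell+1,c_S-1)$ either equals $A$ (and $x\to T$ replaces $a$ there by its column-trail predecessor, $<a$) or lies strictly below $A$ in its column (carrying a value $<a$ in $T$, which sliding can only further decrease), while labels further west are smaller by row monotonicity. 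By Figure \ref{config}, $J$ sits at $(\ell+1,c_S)$, and its value $j>s>a$ is the smallest label $>a$ in row $\ell+1$ of $x\to T$; hence $a$ bumps $j$.

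With the first box of the new row trail in row $\ell+1$ being $J$ carrying $j$ in both cases, the hypothesis of Lemma \ref{part}(ii) is satisfied, so the row trail and its sliding agree with those of $T\leftarrow y$ above row $\ell$. Together with the column-trail sliding of $x\to T$ (which the row insertion leaves untouched outside the row trail) and the three exceptional assignments at $S,B,J$ obtained above, this yields the stated labelling. The main obstacle is the case $i<a$ in row $\ell+1$: Lemma \ref{modify} cannot be invoked as the inserted element is $a$ rather than $s$, and one must use the geometry of the column trail near $S$ to rule out any label in $(a,j)$ appearing left of $c_S$ in row $\ell+1$ of $x\to T$.
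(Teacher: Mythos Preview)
Your proof is correct and follows essentially the same approach as the paper's: both split into the cases $i<a$ and $i>a$, use Lemma~\ref{part}(i) to reach row $\ell$ with $i$ to be inserted, determine via the configurations of Figure~\ref{config} which of $A,B,I,J$ are adjacent to $S$, trace the bumping through rows $\ell$ and $\ell+1$, and then invoke Lemma~\ref{part}(ii) for the remaining rows. The delicate point you correctly identify---that in the case $i<a$ one cannot invoke Lemma~\ref{modify} verbatim in row $\ell+1$ because the inserted element has changed from $s$ to $a$---is handled in the paper by exactly the same argument you give (examining the box $(\ell+1,c_S-1)$ and using that it lies weakly below $A$ in its column).
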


Note that the slidings are unambiguous.

\begin{figure}

\begin{tikzpicture}[scale=0.5]

\draw(24,4) -- (24,6);
\draw[ultra thick](25,3) -- (25,4) -- (26,4) -- (26,3) -- (25,3);
\draw(25,2) -- (25,6);
\draw(26,3) -- (26,5);
\draw(25,5) -- (26,5);
\draw(24,4) -- (26,4);
\draw(24,3) -- (28,3);
\draw(25,2) -- (28,2);
\draw(24,3) -- (24,4);

\draw (24.5,5) node{$\bf a$};
\draw (27,2.5) node{$i$};
\draw (25.5,4.5) node{$j$};
\draw (25.5,3.5) node{$\bf s$};
\draw (24.5,3.5) node{$t$};

%

\draw(35,2) -- (35,5);
\draw(36,3) -- (36,5);
\draw(34,5) -- (36,5);
\draw(34,4) -- (36,4);
\draw(34,3) -- (38,3);
\draw(35,2) -- (38,2);
\draw(34,3) -- (34,5);
\draw[ultra thick](35,3) -- (35,4) -- (36,4) -- (36,3) -- (35,3);

\draw (37,2.5) node{$\bf i$};
\draw (34.5,4.5) node{$v$};
\draw (35.5,4.5) node{$\bf j$};
\draw (35.5,3.5) node{$\bf a$};
\draw (34.5,3.5) node{$t$};

\draw(44,3) -- (44,5);
\draw(43,3) -- (43,5);
\draw(43,3) -- (44,3);
\draw(43,4) -- (44,4);
\draw(43,5) -- (44,5);
\draw[ultra thick](43,3) -- (43,4) -- (44,4) -- (44,3) -- (43,3);

\draw (43.5,4.5) node{$a$};
\draw (43.5,3.5) node{$i$};

\end{tikzpicture}
\caption{Case $i<a$, tableaux $T$, $x\to T$, $(x\to T)\leftarrow y$}\label{i<a}
\end{figure}

\begin{figure}
\begin{tikzpicture}[scale=0.5]

\draw[ultra thick](25,3) -- (25,4) -- (26,4) -- (26,3) -- (25,3);
\draw(24,4) -- (24,6);
\draw(25,2) -- (25,6);
\draw(26,2) -- (26,5);
\draw(23,5) -- (23.9,5);
\draw(24.1,5) -- (24.9,5);
\draw(25.1,5) -- (26,5);
\draw(25.1,4) -- (27,4);
\draw(23,4) -- (23.9,4);
\draw(24.1,4) -- (24.9,4);
\draw(25.1,4) -- (27,4);

\draw(24,3) -- (28,3);
\draw(25,2) -- (28,2);
\draw(24,3) -- (24,4);
\draw(27,3) -- (27,4);

\draw (24.5,5.7) node{$\bf a$};
\draw (27,2.5) node{$i$};
\draw (23.3,4.5) node{$j$};
\draw (25.5,3.5) node{$\bf s$};
\draw (25.5,2.5) node{$t$};
\draw(26.5,3.5) node{$\bf b$};
%
\draw(35,3) -- (35,4);
\draw(36,2) -- (36,5);
\draw(33,5) -- (36,5);
\draw(33,4) -- (37,4);
\draw(35,3) -- (38,3);
\draw(36,2) -- (38,2);
\draw(37,3) -- (37,4);
\draw[ultra thick](35,3) -- (35,4) -- (36,4) -- (36,3) -- (35,3);

\draw (37,2.5) node{$\bf i$};
\draw (33.5,4.5) node{$\bf j$};
\draw (35.5,3.5) node{$a$};
\draw(36.5,3.5) node{$\bf s$};
\draw(44,3) -- (44,4);
\draw(43,3) -- (43,4);
\draw(43,3) -- (45,3);
\draw(43,4) -- (45,4);
\draw(45,3) -- (45,4);
\draw[ultra thick](43,3) -- (43,4) -- (44,4) -- (44,3) -- (43,3);

\draw (44.5,3.5) node{$i$};
\draw (43.5,3.5) node{$a$};
\end{tikzpicture}
\caption{Case $i>a$, tableaux $T$, $x\to T$, $(x\to T)\leftarrow y$}\label{i>a}
\end{figure}

\begin{proof} A. We assume that $i,a$ exist. Suppose that $i<a$. In $T$, let $t$ be the label at the left of $s$; since in $T\leftarrow y$, $i$ bumps $s$, we have $i>t$, and 
therefore $t\neq a$; thus $a$ is not at the left of $s$ and it follows by Figure \ref{config} that $j$ is above $s$; moreover $a$ is in the column 
at the left of $s$, and higher than $s$: see Figure \ref{i<a} left part, where the trail of the column insertion of $x$ into $T$ is represented in boldface.

Applying the column insertion of $x$ into $T$, which amounts to slide the trail $\ldots,a,s,b,\ldots$, we obtain $x\to T$ in the central part of Figure \ref{i<a}. Note that $s$ is now in box $B$. Since 
$i<a$, and $i>t$ as we just saw, the trail of $(x\to T)\leftarrow y$, which contains $i$ by Lemma \ref{part} (i), also contains $a$. Now, observe that the element $v$ at 
the left of $j$ in $x\to T$ is $<a$: indeed, either the element $u$ of $T$ in this box is not displaced in the column insertion of $x$ into $T$, so that $u=v$ and $u$ is under $a$ in the same column of $T$; or $u$ is displaced and then $u=a$, and the element $v$ replacing $u$ is 
smaller than $a$. Hence $a<j$ and $a>v$, and the trail of $(x\to T)\leftarrow y$ contains $j$.

Applying the row insertion of $y$ into $x\to T$, we obtain the right part of Figure \ref{i<a}. Note that $s$ is still in box $B$, since the trail of $(x\to T)\leftarrow y$ is strictly under $B$, by Lemmas \ref{under} and \ref{part} (i).
The other assertions follow from Lemma \ref{part}.

We assume now that $i>a$. Let $t$ be the label right under $s$ in $T$; since in the insertion $x\to T$, $a$ bumps $s$, we must have $a>t$, hence $i\neq t$. Thus $i$ is in the row under $s$, stricly at the right of $s$, and by Figure \ref{config}, $b$ is at the right of $s$;
see Figure \ref{i>a} left part; we indicate there that $a$ is in the column at the left of $s$ and in some row weakly above $s$; similarly, $j$ is in the row above $s$ and in some column weakly at the left of $s$. 

Applying the column insertion $x$ into $T$, which amounts to slide its trail $\ldots a,s,b,\ldots$, we obtain $x\to T$ in the center of Figure \ref{i>a}. Note that $j$ is in box $J$, both in $T$ and in $x\to T$, by Lemma \ref{under}.
The trail of $(x\to T)\leftarrow y$ contains $i$ by Lemma \ref{part} (i), hence also $s$, since $i>a$. Now, if we denote by $v$ the element at the left of $j$, then $v<s$: indeed, in the insertion $T\leftarrow y$, $s$ bumps $j$, hence $s>u$ where $u$ is the element at the left of $j$ in $T$; and $v\leq u$, since the insertion $x\to T$ does not increase elements. Thus $v<s<j$, hence the trail we consider contains $j$. 

The trail is represented in boldface in the center of Figure \ref{i>a}, and we thus obtain $(x\to T)\leftarrow y$ on the right part of Figure \ref{i>a}. The other assertions follow from Lemma \ref{part}.

B. The cases where $a$ or $b$ does not exists is treated quite similarly, with essentially the same arguments. Observe that if, for example, $i$ does not exist, then $s$ must be in the first row and $b$ at its right. We omit the details of these special cases.
\end{proof}

\section{Proof of Schensted's lemma}

Consider the two trails of the insertions $x\to T$ and $T\leftarrow y$. 

If these two trails do not intersect, then the column trail is strictly at the north-west of the row trail, and
there is no interference between 
them: the tableaux $x\to(T\leftarrow y)$ and $(x\to T)\leftarrow y$ are both equal to the tableau obtained from $T$ by sliding the two 
trails of the insertions $x\to T$ and $T\leftarrow y$.

Suppose now that these two trails have an intersection, hence a strong intersection, which is a unique box (Lemmas \ref{inter} and 
\ref{most}). 

Suppose first that this box is the empty box of both trails, denoted $S$. Denoting by $a$ and $i$ the last label in the column and row
trail respectively, it is easy to see that the two tableaux $x\to(T\leftarrow y)$ and $(x\to T)\leftarrow y$ 
are both obtained by sliding the labels in the two trails, except $a$ and $i$ which are added as follows: if $i<a$, then $i$ is put 
in box $S$ and $a$ in the box above; if $i>a$, $a$ is put in box $S$, and $i$ in the box at the right.

Thus we may assume that the intersecting box is unique and not the empty box of either trail. Then we conclude using Proposition \ref{concl} and using the symmetry row/column.

\section{Conclusion}

One may state what happens as follows: one has the two trails of $x\to T$ and $T\leftarrow y$. Then $x\to T \leftarrow y$ is obtained by 
sliding the two trails, except at the boxes where there is a conflict; this is the case for $i$ and $a$, which both want to slide into $S$, 
and for $s$, which has the choice of sliding into $B$ or $J$; the conflict is solved, depending on the inequality between $a$ and $i$, by the 
first sentence in Proposition \ref{concl}. The two cases are represented in Figure \ref{Cases}.

\begin{figure}

\begin{tikzpicture}[scale=0.37]

\draw(25,3) -- (25,5);
\draw(26,3) -- (26,5);
\draw(25,5) -- (26,5);
\draw(25,4) -- (26,4);
\draw(25,3) -- (26,3);
\draw(26,4) -- (27,4);
\draw(27,4) -- (27,1);
\draw(26,3) -- (26,1);
\draw[ultra thick] (25,3) -- (26,3) --(26,4)--(25,4)--(25,3);

\draw (25.5,4.6) node{$j$};
\draw (25.5,3.5) node{$s$};
\draw (26.5,2) node{$b$};
\draw (31,3) node {$\to$};
\draw(35,3) -- (35,5);
\draw(36,3) -- (36,5);
\draw(35,5) -- (36,5);
\draw(35,4) -- (36,4);
\draw(35,3) -- (36,3);
\draw(36,4) -- (37,4);
\draw(37,4) -- (37,1);
\draw(36,3) -- (36,1);
\draw[ultra thick] (35,3) -- (36,3) --(36,4)--(35,4)--(35,3);

\draw (35.5,4.5) node{$a$};
\draw (35.5,3.5) node{$i$};
\draw (36.5,2) node{$s$};
\draw(45,3) -- (45,4);
\draw(46,3) -- (46,5);
\draw(43,5) -- (46,5);
\draw(43,4) -- (46,4);
\draw(45,3) -- (47,3);
\draw(46,4) -- (47,4);
\draw(47,4) -- (47,3);
\draw(46,3) -- (46,3);
\draw[ultra thick] (45,3) -- (46,3) --(46,4)--(45,4)--(45,3);

\draw (46.5,3.5) node{$b$};
\draw (45.5,3.5) node{$s$};
\draw (44,4.5) node{$j$};
\draw (50,3) node {$\to$};

\draw(55,3) -- (55,4);
\draw(56,3) -- (56,5);
\draw(53,5) -- (56,5);
\draw(53,4) -- (56,4);
\draw(55,3) -- (57,3);
\draw(56,4) -- (57,4);
\draw(57,4) -- (57,3);
\draw(56,3) -- (56,3);
\draw[ultra thick] (55,3) -- (56,3) --(56,4)--(55,4)--(55,3);

\draw (56.5,3.5) node{$i$};
\draw (55.5,3.5) node{$a$};
\draw (54,4.5) node{$s$};
\end{tikzpicture}
\caption{the two cases $i<a$ and $i>a$ before and after the row and column insertions}\label{Cases}
\end{figure}
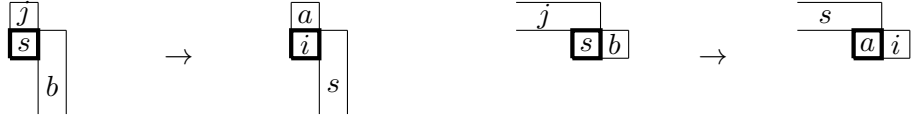

An example is given in the Figure \ref{tab}, where are represented a tableau $T$, the two trails of $7\to T$ and $T\leftarrow 8$, and the final tableau $7\to T\leftarrow 8$; here $i=10, a=11, s=13, j=18, b=14$.

\begin{figure}
\centering
\begin{minipage}{0.33\linewidth}
\begin{ytableau}
\underline\emptyset \\
17&\underbar{19}\\
\mid\!\!11&  \underbar{18}\\
4&\mid\!\!\underbar{13}&\mid\!\!14 \\
2&6&\underbar {10}&\mid\!\!15&\mid\!\!\emptyset\\
1&3&5&\underbar 9&12&16
\end{ytableau}
\label{tab1}
\end{minipage}
\hspace{1cm}
\begin{minipage}{0.33\linewidth}
\begin{ytableau}
19 \\
17&18\\
7&11\\
4&10&13 \\
2&6&9&14&15\\
1&3&5&8&12&16
\end{ytableau}
\end{minipage}
\caption{}\label{tab}
\end{figure}

\medskip
\noindent{\bf Acknowledgments} 
This work was partially supported by NSERC, Canada. Mail exchanges with Bruce Sagan, Richard Stanley, Adriano Garsia, Jean-Yves Thibon, and discussions with Franco Saliola, were helpful. The careful and critical listening of the students of the course ``Algorithmes en combinatoire" at the winter session of 2023 at our university, proved to be very helpful, too.

\end{document}